\documentclass[11pt]{article}
\usepackage{amsmath}
\usepackage{amssymb}
\usepackage{amsthm}
\usepackage[mathscr]{eucal}
\usepackage{lineno}
%\modulolinenumbers[2]
%\usepackage[backref]{hyperref}
%
%
\theoremstyle{definition}
\newtheorem{definition}{Definition}%[section]
\newtheorem{theorem}[definition]{Theorem}
\newtheorem*{theorem*}{Conjecture}
\newtheorem{proposition}[definition]{Proposition}
\newtheorem{lemma}[definition]{Lemma}

\theoremstyle{remark}
\newtheorem{remark}[definition]{Remark}

\newcounter{enumctr}

%
% mathematical symbols
%

\newcommand{\R}{\mathbb{R}}

\newcommand{\C}{\mathbb{C}}
\newcommand{\id}{\hbox{id}}

\newcommand{\eps}{\varepsilon}
\renewcommand{\phi}{\varphi}

\newcommand{\rT}{\mathrm {T}}
\setlength{\parindent}{0cm}
\setlength{\parskip}{2ex}
\providecommand{\keywords}[1]{\textbf{\textbf{Key words: }} #1}
\begin{document}

%\linenumbers

\title{\vspace*{-10mm}
An instability theorem for nonlinear\\ fractional differential systems}
\author{
N.D.\ Cong\thanks{Email: ndcong@math.ac.vn,  Institute of Mathematics, Vietnam Academy of Science and Technology, Viet Nam.}, 
T.S.\ Doan\thanks{Email: dtson@math.ac.vn,  Institute of Mathematics, Vietnam Academy of Science and Technology, Viet Nam and Department of Mathematics, Hokkaido University, Japan.}, 
S.\ Siegmund\thanks{Email: stefan.siegmund@tu-dresden.de, Department of Mathematics, Technische Universit\"{a}t Dresden, Dresden, Germany.} 
{ }and H.T.\ Tuan\thanks{Email: httuan@math.ac.vn,  Institute of Mathematics, Vietnam Academy of Science and Technology, Viet Nam.}
}
\date{}
\maketitle
\begin{abstract}
In this paper, we give a criterion on instability of an equilibrium of a nonlinear Caputo fractional differential system. More precisely, we prove that if the spectrum of the linearization has at least one eigenvalue in the sector $$\left\{\lambda\in\C\setminus\{0\}:|\arg{(\lambda)}|<\frac{\alpha \pi}{2}\right\},$$ where $\alpha\in (0,1)$ is the order of the fractional differential system, then the equilibrium of the nonlinear system is unstable.
\end{abstract}
\keywords{\emph{fractional differential equations, qualitative theory, stability theory, instability condition}}
%
%
%\linenumbers
\section{Introduction}
In recent years, fractional differential equations have attracted increasing interest due to their many applications in various fields of science and engineering, see e.g., \cite{Oldham,Samko}.
One of the fundamental problems of the qualitative theory of fractional differential equations is stability theory. So far, there have been a number of publications on stability theory for different types of fractional systems, e.g.,\  linear fractional differential equations \cite{Matignon_1996, Bonilla,Cermak13, Kaminski}, linear fractional difference equations \cite{Abu,Cermak} and nonlinear fractional differential equations \cite{Ahmed, Deng, Agarwal, Cong_3}.

In this paper, we are interested in stability of the trivial solution of a
nonlinear Caputo fractional differential system of order $\alpha$, $0<\alpha<1$,
\begin{equation}\label{IntroEq}
^{C\!}D_{0+}^\alpha x(t)=Ax(t)+f(x(t)),
\end{equation}
where $t\geq 0$, $x\in \R^d$, $A\in\R^{d\times d}$ and $f:\R^d \rightarrow \R^d$ is continuous on $\R^d$ and Lipschitz continuous in a neighborhood of the origin, $f(0)=0$ and $\lim_{r\to 0} \ell_f(r)=0$, where
\begin{equation}\label{eqn.l_f}
\ell_f(r):=\sup_{\substack{x,y\in B_{\R^d}(0,r)\\ x\neq y}}\frac{\|f(x)-f(y)\|}{\|x-y\|},
\end{equation}
with $B_{\R^d}(0,r):=\big\{x\in\R^d: \|x\|\leq r\big\}$. Similarly as for ordinary differential equations, we would expect for fractional differential equations that if the linear system
\begin{equation}\label{LinearizationEq}
^{C\!}D_{0+}^\alpha x(t)=Ax(t),
\end{equation}
 is asymptotically stable (or unstable) then the trivial solution of the perturbed system \eqref{IntroEq} is also asymptotically stable (or unstable, respectively), since these conclusions hold in the theory of ordinary differential equations, see e.g., \cite[Chapter 13]{Coddington}. In \cite{Cong_3}, we give an affirmative answer in the case when the linear system \eqref{LinearizationEq} is asymptotically stable by proving that the trivial solution of system \eqref{IntroEq}  is then also asymptotically stable.

However, in the case that system \eqref{LinearizationEq} is unstable, e.g.,\ if $A$ has at least one eigenvalue $\lambda$ with its argument\footnote{For a nonzero complex number $\lambda$, we define its argument to be in the interval $-\pi < \arg{(\lambda)}\leq \pi$.} satisfying that $|\arg(\lambda)|<\frac{\alpha \pi}{2}$, the question whether the trivial solution of \eqref{IntroEq} is unstable still remains open. Notice that in the scalar case, this question is part of a conjecture of J.~Audounet, D.~Matignon and G.~Montseny~\cite[Theorem 3, p.~81]{Matignon} which is stated as follows.
\begin{theorem*}
The local stability of the equilibrium $x^{*}=0$ of the nonlinear fractional differential system $^{C\!}D_{0+}^{\alpha} x(t) = f(x(t))$ is governed by the global stability of the linearized system near the equilibrium $^{C\!}D_{0+}^{\alpha} x(t) = \lambda x(t)$, where $\lambda = f'(0)\in \C$, namely:
\begin{itemize}
\item [(i)] $x^{*}=0$ is locally asymptotically stable if $|\arg(\lambda)|>\frac{\alpha \pi}{2}$,
\item [(ii)] $x^{*}=0$ is not locally stable if $|\arg(\lambda)|<\frac{\alpha \pi}{2}$.
\end{itemize}
\end{theorem*}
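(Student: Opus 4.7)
The plan is a proof by contradiction combining the Laplace-transform analysis of the Caputo equation's equivalent Volterra equation with a Perron-style rewriting that recasts any bounded solution as a fixed point of a genuine contraction in the sup norm. I would first write $f(x) = \lambda x + g(x)$ with $g(0)=0$ and $\ell_g(r)\to 0$ as $r\to 0$, and use variation of constants:
\begin{equation*}
x(t) = E_\alpha(\lambda t^\alpha)\,x(0) + \int_0^t (t-s)^{\alpha-1}E_{\alpha,\alpha}(\lambda(t-s)^\alpha)\,g(x(s))\,ds.
\end{equation*}
Set $p_0 := \lambda^{1/\alpha}$ (principal branch). The hypothesis $|\arg(\lambda)|<\alpha\pi/2$ places $p_0$ strictly inside the open right half-plane with $\mu := \operatorname{Re}(p_0) > 0$. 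Using the Mittag-Leffler asymptotic $E_\alpha(\lambda t^\alpha) = \frac{1}{\alpha}e^{p_0 t} + O(t^{-\alpha})$ (and the analogous expansion for $E_{\alpha,\alpha}$), passing to the limit in $e^{-p_0 t}x(t)$ as $t\to\infty$ gives, for any bounded solution, the cancellation identity $x(0) + p_0^{1-\alpha}\,\widehat{g\circ x}(p_0) = 0$, where the hat denotes the Laplace transform.

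Assume for contradiction that $x^*=0$ is locally stable, so that for each small $\varepsilon>0$ there is $\delta>0$ with $|x(0)|<\delta$ implying $|x(t)|\le\varepsilon$ for all $t\ge 0$. Substituting the cancellation identity back into the Volterra formula and splitting $\int_0^\infty e^{-p_0 s}g(x(s))\,ds = \int_0^t+\int_t^\infty$, I would rewrite each such bounded solution as
\begin{equation*}
x(t) = \int_0^t H_1(t,s)\,g(x(s))\,ds + \int_t^\infty H_2(t,s)\,g(x(s))\,ds,
\end{equation*}
where $H_1(t,s) = (t-s)^{\alpha-1}E_{\alpha,\alpha}(\lambda(t-s)^\alpha) - p_0^{1-\alpha}e^{-p_0 s}E_\alpha(\lambda t^\alpha)$ and $H_2(t,s) = -p_0^{1-\alpha}e^{-p_0 s}E_\alpha(\lambda t^\alpha)$. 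The leading $\frac{p_0^{1-\alpha}}{\alpha}e^{p_0(t-s)}$ growths of the two pieces of $H_1$ cancel exactly, leaving only the next-order Mittag-Leffler remainders of size $O((t-s)^{-\alpha-1})$ at infinity plus the original integrable singularity $O((t-s)^{\alpha-1})$ at $s=t$; these, together with the exponential decay $e^{-\mu(s-t)}$ of $H_2$ as $s\to\infty$, yield the uniform bound $\sup_{t\ge 0}\bigl(\int_0^t|H_1(t,s)|\,ds + \int_t^\infty|H_2(t,s)|\,ds\bigr) \le C$ for a constant $C=C(\alpha,\lambda)$.

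Combining with $|g(y)|\le \ell_g(\varepsilon)|y|$ for $|y|\le\varepsilon$ gives $\|x\|_\infty \le C\ell_g(\varepsilon)\|x\|_\infty$; since $\ell_g(\varepsilon)\to 0$ as $\varepsilon\to 0$, I can choose $\varepsilon$ small enough that $C\ell_g(\varepsilon)<1$, forcing $\|x\|_\infty = 0$ and hence $x\equiv 0$. But stability guarantees nontrivial bounded solutions for every $0<|x(0)|<\delta$, so this is the desired contradiction and $x^*=0$ must be unstable.

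The main obstacle is establishing the uniform bound on the kernels: this requires not just the leading exponential Mittag-Leffler asymptotic but the next-order expansion $E_{\alpha,\alpha}(\lambda u^\alpha) = \frac{p_0^{1-\alpha}}{\alpha}u^{1-\alpha}e^{p_0 u} + O(u^{-2\alpha})$, so that after the leading cancellation in $H_1$ the remainder $(t-s)^{\alpha-1}\cdot O((t-s)^{-2\alpha})$ is integrable at infinity. The strict sector hypothesis $|\arg(\lambda)|<\alpha\pi/2$ is essential in two places: it places $p_0$ in the open right half-plane (so the Laplace transform of the bounded $x$ is analytic there and the cancellation identity can be derived), and it secures the exponential leading term in the Mittag-Leffler asymptotics (so the cancellation in $H_1$ actually eliminates the exponential growth). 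For the vector-valued theorem announced in the abstract, one applies a Dunford decomposition to project onto the unstable spectral subspace of $A$ and runs the above scalar argument on the unstable block, the stable-component contributions being absorbed into the small factor $\ell_g(\varepsilon)$.
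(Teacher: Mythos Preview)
Your argument for part~(ii) is correct and is essentially the paper's own proof specialized to $d=1$: the operator with kernels $H_1,H_2$ is exactly the Lyapunov--Perron operator \eqref{DefineT1} after splitting $\int_0^\infty=\int_0^t+\int_t^\infty$, your cancellation identity is Lemma~\ref{LimitLemma}, and your uniform kernel bound is Lemma~\ref{Lemma4}. The one substantive difference is the choice of norm---you work in $\|\cdot\|_\infty$, the paper in the exponentially weighted norm $\|\cdot\|_w$---and in the scalar case both succeed. Your closing sketch of the vector extension, however, has a gap at precisely this point: when $A$ has eigenvalues outside $\Lambda_\alpha^u$ (in particular $\mu_i=0$ or $|\arg\mu_i|=\frac{\alpha\pi}{2}$), the stable-block kernel $\int_0^t (t-\tau)^{\alpha-1}|E_{\alpha,\alpha}(\mu_i(t-\tau)^\alpha)|\,d\tau$ is \emph{not} uniformly bounded in $t$, so those contributions cannot be ``absorbed into the small factor $\ell_g(\varepsilon)$'' in the sup norm. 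The paper gives an explicit two-dimensional counterexample to sup-norm contractivity just after \eqref{DefineT2}, and it is exactly to handle this case that the weighted norm is introduced.
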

In this paper, we establish an instability theorem for the nonlinear Caputo fractional differential system \eqref{IntroEq} when the linearization \eqref{LinearizationEq} is unstable. As a consequence, we also prove statement (ii) of the conjecture above. The main ingredient in the proof is to construct a suitable Lyapunov--Perron operator and to show that a bounded solution of \eqref{IntroEq} must be a  fixed point of this operator. By constructing a suitable initial value for which the associated Lyapunov--Perron operator has no nontrivial fixed point, we show the existence of an unbounded solution which leads to the instability of the system.

%Note that this technique is also used to establish stable manifold %for nonlinear fractional differential equations whose linear parts %are hyperbolic, i.e.,,,,,, the spectrum of the linear parts
%do not intersect the line $\{z\in \C: \hbox{arg}%(z)=\frac{\alpha\pi}{2}\}$, see . In comparison to the %work in

We note that \cite{LiMa13} claimed that they proved the conjecture above, but their paper contains serious flaws which make their construction as well as their proof incorrect; a discussion about their paper will be given in Remark~\ref{discussion} below.

The paper is organized as follows. Section 2 is a preparatory section where we present some basic notions from fractional calculus and give some basic properties of Mittag-Leffler functions. Section 3 is devoted to the main result of the paper in which we prove a theorem on instability of the trivial solution of the nonlinear Caputo fractional differential system \eqref{IntroEq}.

To conclude the introductory section, we fix some notation which will be used later.
Let $\R_{\geq 0}$ denote the set of all nonnegative real numbers. For $\alpha \in (0,1)$, we define
\begin{equation}\label{Sector}
\Lambda_{\alpha}^u:=\left\{z\in\C\setminus\{0\}:|\text{arg}(z)|< \frac{\alpha \pi}{2}\right\}.
\end{equation}
Let $(X,\|\cdot\|)$ be a Banach space. Denote by
$C(\R_{\geq 0};X)$ the linear space of all continuous functions $\xi:\R_{\geq 0}\rightarrow X$, and by
$C_\infty(\R_{\geq 0};X)$ the linear space of all continuous functions $\xi:\R_{\geq 0}\rightarrow X$ such that
\[
\|\xi\|_\infty:=\sup_{t\in \R_{\geq 0}}\|\xi(t)\|<\infty.
\]
Clearly, $(C_\infty(\R_{\geq 0};X),\|\cdot\|_\infty)$ is a Banach space.
\section{Preliminaries}
\subsection{Fractional differential equations}
We briefly recall an abstract framework of fractional calculus.
%----------------------------
% Fractional calculus
%-----------------------------

Let $\alpha>0$, $[a,b]\subset \R$ and $x:[a,b]\rightarrow \R$ be a measurable function such that $\int_a^b|x(\tau)|\;d\tau<\infty$. The \emph{Riemann--Liouville integral operator of order $\alpha$} is defined by
\[
(I_{a+}^{\alpha}x)(t):=\frac{1}{\Gamma(\alpha)}\int_a^t(t-\tau)^{\alpha-1}x(\tau)\;d\tau,
\]
where $\Gamma(\cdot)$ is the Gamma function.
The \emph{Caputo fractional derivative} $^{C\!}D_{a+}^\alpha x$ of a function $x\in C^m([a,b])$ is defined by
\[
(^{C\!}D_{a+}^\alpha x)(t):=(I_{a+}^{m-\alpha}D^mx)(t),
\]
where $D=\frac{d}{dt}$ is the usual derivative and $m:=\lceil\alpha\rceil$ is the smallest integer larger or equal to $\alpha$.

The Caputo fractional derivative of a $d$-dimensional vector function $x(t)=(x_1(t),\dots,x_d(t))^{\rT}$ is defined component-wise as
\[
(^{C\!}D_{a+}^\alpha x)(t):=(^{C\!}D_{a+}^\alpha x_1(t),\dots,^{C\!}D_{a+}^\alpha x_d(t))^{\rT}.
\]

We now recall the notions of stability of the trivial solution of the fractional differential equation \eqref{IntroEq}, see also \cite[Definition 7.2, p.~157]{Kai}. Note that since $f$ is locally Lipschitz continuous, for any initial value $x_0\in\R^d$ in a neighborhood of 0, the equation \eqref{IntroEq} has a unique solution, which we denote by $\varphi(\cdot,x_0)$, with its maximal interval of existence $I = [0,t_{max}(x_0))$, $0<t_{max}(x_0)\leq \infty$.
\begin{definition}\label{DS}
The trivial solution of \eqref{IntroEq} is called \emph{stable} if for any $\varepsilon >0$ there exists $\delta=\delta(\varepsilon)>0$ such that for every $\|x_0\|<\delta$ we have $t_{max}(x_0) = \infty$ and
\[
\|\phi(t,x_0)\|\leq \eps\qquad\hbox{for } t\ge 0.
\]
The trivial solution is called \emph{unstable} if it is not stable.
\end{definition}
\subsection{Mittag-Leffler function}
The Mittag-Leffler function is a generalization of the exponential function. Like the exponential function plays a very important role in the theory of ordinary differential equation, the Mittag-Leffler function is at the heart of the theory of fractional differential equations (see, e.g., \cite{Podlubny}). The Mittag-Leffler function is defined by the
formula
\[
E_{\alpha,\beta}(z):=\sum_{k=0}^\infty \frac{z^k}{\Gamma(\alpha k+\beta)},\qquad E_{\alpha}(z):=E_{\alpha,1}(z),
\]
where $z\in \C$, $\alpha>0$, $\beta >0$. For a constant matrix $A$ the matrix-valued Mittag-Leffler function is defined by
\[
E_{\alpha,\beta}(A):=\sum_{k=0}^\infty \frac{A^k}{\Gamma(\alpha k+\beta)},\qquad E_{\alpha}(A):=E_{\alpha,1}(A).
\]
We present in this subsection some basic properties of Mittag-Leffler functions. These results are slight refinements of known results in the theory of Mittag-Leffler functions which are adapted to our case. To derive these estimates one uses the integral representation of Mittag-Leffler functions, see e.g., \cite{Podlubny}; we give only a sketch of the proof.

\begin{lemma}\label{Lemma3}
Let $\lambda\in\Lambda_\alpha^u$, where $\Lambda_\alpha^u$ is defined in \eqref{Sector}. There exist a real number $t_0>0$ and a positive constant $m(\alpha,\lambda)$ such that the following estimates hold:
\begin{align*}
\left|E_\alpha(\lambda t^{\alpha})-\frac{1}{\alpha}\exp{(\lambda^{\frac{1}{\alpha}}t)}\right|&\le \frac{m(\alpha,\lambda)}{t^{\alpha}},\\
\left|t^{\alpha-1}E_{\alpha,\alpha}(\lambda t^{\alpha})-\frac{1}{\alpha}\lambda^{\frac{1}{\alpha}-1
}\exp{(\lambda^{\frac{1}{\alpha}}t)}\right|&\le \frac{m(\alpha,\lambda)}{t^{\alpha+1}},
\end{align*}
for every $t\geq t_0$.
\end{lemma}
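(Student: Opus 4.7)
The plan is to reduce both estimates to the classical asymptotic expansion of the Mittag--Leffler function (see, e.g., Podlubny, Theorem~1.4): for $\alpha\in(0,1)$, $\beta\in\R$, any $\mu\in(\alpha\pi/2,\alpha\pi)$, and any integer $N\ge 1$, one has
\[
E_{\alpha,\beta}(z)=\frac{1}{\alpha}\,z^{(1-\beta)/\alpha}\exp\!\left(z^{1/\alpha}\right)-\sum_{k=1}^{N}\frac{z^{-k}}{\Gamma(\beta-\alpha k)}+O\!\left(|z|^{-N-1}\right)
\]
as $|z|\to\infty$, uniformly in $|\arg(z)|\le\mu$. Since by hypothesis $|\arg(\lambda)|<\alpha\pi/2$, we may pick $\mu$ with $|\arg(\lambda)|\le\mu<\alpha\pi$, and for all $t>0$ the argument of $z=\lambda t^\alpha$ equals $\arg(\lambda)$, so the substitution $z=\lambda t^\alpha$ is legitimate and gives $|z|=|\lambda|\,t^\alpha\to\infty$ as $t\to\infty$.

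To justify the expansion, I would briefly recall the Hankel-contour representation
\[
E_{\alpha,\beta}(z)=\frac{1}{2\pi i\alpha}\int_{\gamma}\frac{e^{\zeta^{1/\alpha}}\,\zeta^{(1-\beta)/\alpha}}{\zeta-z}\,d\zeta,
\]
where $\gamma$ is a standard Hankel path encircling the negative real axis. Pushing $\gamma$ across the pole at $\zeta=z$ produces, by the residue theorem, the leading term $\frac{1}{\alpha}z^{(1-\beta)/\alpha}\exp(z^{1/\alpha})$; expanding $(\zeta-z)^{-1}=-\sum_{k=0}^{N-1}\zeta^k z^{-k-1}+\zeta^{N}z^{-N}(\zeta-z)^{-1}$ in the remaining contour integral and using the Hankel formula $\Gamma(s)^{-1}=\frac{1}{2\pi i}\int_\gamma e^\zeta \zeta^{-s}d\zeta$ yields the algebraic tail and the $O(|z|^{-N-1})$ remainder.

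For the first estimate, take $\beta=1$ and $N=1$. Since $\Gamma(1-\alpha)$ is finite, the expansion gives $E_\alpha(z)-\frac{1}{\alpha}\exp(z^{1/\alpha})=O(|z|^{-1})$, and substituting $z=\lambda t^\alpha$ converts this into $O(t^{-\alpha})/|\lambda|$, which is the claimed bound once the constant is absorbed into $m(\alpha,\lambda)$. For the second estimate, take $\beta=\alpha$; here the key observation is the cancellation $\Gamma(\alpha-\alpha)^{-1}=\Gamma(0)^{-1}=0$, which kills the $k=1$ algebraic term. Applying the expansion with $N=2$ therefore yields
\[
E_{\alpha,\alpha}(z)-\frac{1}{\alpha}\,z^{(1-\alpha)/\alpha}\exp\!\left(z^{1/\alpha}\right)=O(|z|^{-2}).
\]
Substituting $z=\lambda t^\alpha$ and using $z^{(1-\alpha)/\alpha}=\lambda^{1/\alpha-1}t^{1-\alpha}$, then multiplying by $t^{\alpha-1}$, transforms the $O(|z|^{-2})=O(t^{-2\alpha})$ remainder into the required $O(t^{-\alpha-1})$ bound. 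The threshold $t_0$ is chosen large enough that the implicit constants in the $O(\cdot)$ notation become effective, and $m(\alpha,\lambda)$ collects the resulting factors (depending on $\alpha$, on $|\lambda|^{-1}$, $|\lambda|^{-2}$, and on the distance of $\arg(\lambda)$ from $\pm\alpha\pi/2$). The only delicate point is to emphasize the cancellation $\Gamma(0)^{-1}=0$ in the second case; without it the naive bound would only give $O(t^{-1})$ after the multiplication by $t^{\alpha-1}$, which is too weak.
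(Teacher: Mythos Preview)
Your proposal is correct and follows essentially the same approach as the paper, which does not give a detailed argument but simply refers to the integral representations and asymptotic estimates in Podlubny's Theorems~1.3 and~1.4. You have fleshed out precisely those estimates, including the crucial point that $1/\Gamma(0)=0$ kills the $k=1$ algebraic term when $\beta=\alpha$, which is what upgrades the remainder to $O(|z|^{-2})$ and hence yields the required $O(t^{-\alpha-1})$ after multiplication by $t^{\alpha-1}$.
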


For a proof of this theorem one uses integral representations of Mittag-Leffler functions and estimates for the integrals similarly as in the proofs of Theorem 1.3 and Theorem 1.4 in \cite[pp.~32--34]{Podlubny}.

\begin{lemma}\label{Lemma4}
Let $\lambda\in\Lambda_\alpha^u$, where $\Lambda_\alpha^u$ is defined in \eqref{Sector}. There exists a positive constant $K(\alpha,\lambda)$ such that the following estimates hold:
\begin{align*}
&\int_t^\infty \left|\lambda^{\frac{1}{\alpha}-1}E_\alpha(\lambda t^\alpha)
\exp(-\lambda^{\frac{1}{\alpha}}\tau)g(\tau)\right|\;d\tau
\leq K(\alpha,\lambda)\|g\|_\infty,\\
&\int_0^t\left|
\left((t-\tau)^{\alpha-1}E_{\alpha,\alpha}(\lambda(t-\tau)^{\alpha})- \lambda^{\frac{1}{\alpha}-1}E_\alpha(\lambda t^\alpha)\exp(-\lambda^{\frac{1}{\alpha}}\tau)\right)g(\tau)\right|\;d\tau\\
&\hspace{6.5cm} \leq K(\alpha,\lambda)\|g\|_\infty,
\end{align*}
for all $t \ge 0$ and any function $g\in C_\infty(\R_{\ge 0};\C)$.
\end{lemma}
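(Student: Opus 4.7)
\textbf{Proof plan for Lemma \ref{Lemma4}.}
The hypothesis $\lambda\in\Lambda_\alpha^u$ gives $|\arg(\lambda^{1/\alpha})|<\pi/2$, so $\sigma:=\mathrm{Re}(\lambda^{1/\alpha})>0$; this is what makes $\exp(-\lambda^{1/\alpha}\tau)$ decay and what drives both estimates. Throughout I factor out $\|g\|_\infty$ at once, reducing everything to bounding two non-negative integrals uniformly in $t\ge 0$.

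For the first estimate, I combine Lemma~\ref{Lemma3} with the continuity of $E_\alpha$ on a compact set to get a single bound of the form $|E_\alpha(\lambda t^\alpha)|\le C(\alpha,\lambda)\exp(\sigma t)$ valid for all $t\ge 0$: for $t\ge t_0$ this follows from the asymptotic $E_\alpha(\lambda t^\alpha)=\frac1\alpha\exp(\lambda^{1/\alpha}t)+O(t^{-\alpha})$, and for $t\in[0,t_0]$ from the maximum of a continuous function. Then
\[
\int_t^\infty \bigl|\lambda^{1/\alpha-1}E_\alpha(\lambda t^\alpha)\exp(-\lambda^{1/\alpha}\tau)\bigr|\,d\tau
\le |\lambda^{1/\alpha-1}|C(\alpha,\lambda)\,\exp(\sigma t)\int_t^\infty \exp(-\sigma \tau)\,d\tau
=\tfrac{|\lambda^{1/\alpha-1}|C(\alpha,\lambda)}{\sigma},
\]
which is independent of $t$, as required.

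The second estimate is the point where Lemma~\ref{Lemma3} is used essentially. The trick is to insert and subtract the common leading term $\frac{1}{\alpha}\lambda^{1/\alpha-1}\exp(\lambda^{1/\alpha}(t-\tau))$, so that the integrand equals
\[
\Bigl[(t-\tau)^{\alpha-1}E_{\alpha,\alpha}(\lambda(t-\tau)^\alpha)-\tfrac{1}{\alpha}\lambda^{1/\alpha-1}\exp(\lambda^{1/\alpha}(t-\tau))\Bigr]
-\lambda^{1/\alpha-1}\exp(-\lambda^{1/\alpha}\tau)\Bigl[E_\alpha(\lambda t^\alpha)-\tfrac{1}{\alpha}\exp(\lambda^{1/\alpha}t)\Bigr],
\]
times $g(\tau)$. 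By the two inequalities of Lemma~\ref{Lemma3}, when $t-\tau\ge t_0$ the first bracket is bounded by $m/(t-\tau)^{\alpha+1}$, and when $t\ge t_0$ the second bracket is bounded by $|\lambda^{1/\alpha-1}|\exp(-\sigma\tau)\cdot m/t^\alpha$. For $t\ge t_0$ I therefore split the $\tau$-integral into $[0,t-t_0]$, where both asymptotic bounds apply and integrate to give a constant (since $\int_0^{t-t_0}(t-\tau)^{-\alpha-1}d\tau\le 1/(\alpha t_0^\alpha)$ and $\int_0^{t-t_0}t^{-\alpha}\exp(-\sigma\tau)d\tau\le 1/(t_0^\alpha\sigma)$), and $[t-t_0,t]$, where I use the direct continuity bounds $|E_{\alpha,\alpha}(\lambda s^\alpha)|\le M_1$ and $|E_\alpha(\lambda t^\alpha)\exp(-\lambda^{1/\alpha}\tau)|\le M_2$ (uniform on the relevant compact sets, because $\exp(\sigma t)\exp(-\sigma\tau)\le \exp(\sigma t_0)$ there) and integrate the only remaining singularity $\int_{t-t_0}^t (t-\tau)^{\alpha-1}d\tau=t_0^\alpha/\alpha$. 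For $t\in[0,t_0]$ the whole integral is over a bounded interval with an integrable singularity and continuous bounded factors, hence bounded by a constant depending only on $\alpha,\lambda,t_0$. Taking $K(\alpha,\lambda)$ to be the maximum of all the resulting constants completes the proof.

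\textbf{Main obstacle.} The delicate step is the bookkeeping near $\tau=t$: the kernel $(t-\tau)^{\alpha-1}$ is singular there and Lemma~\ref{Lemma3} is not available on that slice, so one must peel off a short interval of length $t_0$ and rely on integrability of the singularity together with direct continuity bounds. Keeping the estimates uniform in $t$ on both the small-$t$ and large-$t$ regimes is what makes the constant $K(\alpha,\lambda)$ finite.
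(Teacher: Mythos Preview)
Your argument is correct and is precisely the kind of estimate the paper has in mind: the paper's own proof of Lemma~\ref{Lemma4} is just the one-line remark ``use Lemma~\ref{Lemma3} and repeat the arguments of Lemmas~5 and~6 in \cite{Cong}'', and your add-and-subtract of the leading term $\tfrac{1}{\alpha}\lambda^{1/\alpha-1}\exp(\lambda^{1/\alpha}(t-\tau))$, followed by the split $[0,t-t_0]\cup[t-t_0,t]$ and the small-$t$/large-$t$ dichotomy, is exactly that argument written out. Nothing is missing; you have in fact supplied the details the paper omits.
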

\begin{proof}
The proof of this lemma follows easily by using Lemma \ref{Lemma3} and repeating arguments used in the proofs of Lemma 5 and Lemma 6 in \cite{Cong}.
\end{proof}

\begin{lemma}\label{LimitLemma}
For any function  $g\in C_\infty(\R_{\ge 0};\C)$ and $\lambda\in\Lambda_{\alpha}^u$, we have the following limiting relations:
\begin{align}\label{Eq4}
\lim_{t\to\infty}&\int_0^t
\notag (t-\tau)^{\alpha-1}\frac{E_{\alpha,\alpha}(\lambda(t-\tau)^\alpha)}{E_\alpha(\lambda t^\alpha)}g(\tau)\;d\tau\\
&\hspace{0.5cm}=\; \lambda^{\frac{1}{\alpha}-1}\int_0^\infty\exp(-\lambda^{\frac{1}{\alpha}}\tau)g(\tau)\;d\tau.
\end{align}
\end{lemma}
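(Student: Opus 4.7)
The plan is to add and subtract the ``exponential surrogate'' for $(t-\tau)^{\alpha-1}E_{\alpha,\alpha}(\lambda(t-\tau)^\alpha)$ that is suggested by Lemma~\ref{Lemma3}, and then leverage Lemma~\ref{Lemma4} together with the exponential growth of $E_\alpha(\lambda t^\alpha)$ to kill the error.

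Concretely, I would split the integrand as
\[
(t-\tau)^{\alpha-1}\frac{E_{\alpha,\alpha}(\lambda(t-\tau)^\alpha)}{E_\alpha(\lambda t^\alpha)} \;=\; \frac{1}{E_\alpha(\lambda t^\alpha)}\Bigl[(t-\tau)^{\alpha-1}E_{\alpha,\alpha}(\lambda(t-\tau)^\alpha)-\lambda^{\frac{1}{\alpha}-1}E_\alpha(\lambda t^\alpha)\exp(-\lambda^{\frac{1}{\alpha}}\tau)\Bigr] + \lambda^{\frac{1}{\alpha}-1}\exp(-\lambda^{\frac{1}{\alpha}}\tau),
\]
so that the integral in \eqref{Eq4} becomes $A(t)+B(t)$, where $A(t)$ is $1/E_\alpha(\lambda t^\alpha)$ times the integral controlled by the second estimate of Lemma~\ref{Lemma4}, and $B(t)=\lambda^{\frac{1}{\alpha}-1}\int_0^t\exp(-\lambda^{\frac{1}{\alpha}}\tau)g(\tau)\,d\tau$.

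For the error term $A(t)$, I would apply Lemma~\ref{Lemma4} directly to obtain $|A(t)|\le K(\alpha,\lambda)\|g\|_\infty/|E_\alpha(\lambda t^\alpha)|$. The key observation is that $\lambda\in\Lambda_\alpha^u$ is equivalent to $|\arg(\lambda^{1/\alpha})|<\pi/2$, i.e.\ $\mathrm{Re}(\lambda^{1/\alpha})>0$. Combined with the first estimate in Lemma~\ref{Lemma3}, this forces $|E_\alpha(\lambda t^\alpha)|\sim\frac{1}{\alpha}\exp(\mathrm{Re}(\lambda^{1/\alpha})t)\to\infty$, so $A(t)\to 0$.

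For the main term $B(t)$, since $\mathrm{Re}(\lambda^{1/\alpha})>0$ and $g$ is bounded, the integrand $\exp(-\lambda^{1/\alpha}\tau)g(\tau)$ is absolutely integrable on $[0,\infty)$, so $B(t)\to\lambda^{\frac{1}{\alpha}-1}\int_0^\infty\exp(-\lambda^{\frac{1}{\alpha}}\tau)g(\tau)\,d\tau$ by the dominated convergence theorem (or simply as the tail estimate $|\int_t^\infty\exp(-\lambda^{1/\alpha}\tau)g(\tau)\,d\tau|\le\|g\|_\infty\exp(-\mathrm{Re}(\lambda^{1/\alpha})t)/\mathrm{Re}(\lambda^{1/\alpha})$). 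Adding the two limits yields the claim. There is no real obstacle here; the only nontrivial point is to verify cleanly that $\lambda\in\Lambda_\alpha^u$ indeed implies $\mathrm{Re}(\lambda^{1/\alpha})>0$, which uses the chosen branch of the $\alpha$th root and the strict inequality $|\arg\lambda|<\alpha\pi/2$.
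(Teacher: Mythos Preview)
Your proposal is correct and follows exactly the approach the paper indicates: the paper's proof simply says to use Lemma~\ref{Lemma3}, Lemma~\ref{Lemma4}, and arguments analogous to \cite[Lemma~8]{Cong}, which amounts precisely to your add--and--subtract splitting into $A(t)+B(t)$, killing $A(t)$ via Lemma~\ref{Lemma4} and the growth $|E_\alpha(\lambda t^\alpha)|\to\infty$ from Lemma~\ref{Lemma3}, and passing to the limit in $B(t)$ using $\mathrm{Re}(\lambda^{1/\alpha})>0$. Your verification that $\lambda\in\Lambda_\alpha^u$ implies $\mathrm{Re}(\lambda^{1/\alpha})>0$ is the only point needing care, and you handle it correctly.
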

\begin{proof}
Use Lemma \ref{Lemma3}, Lemma \ref{Lemma4}, and arguments similar to that of the proof of Lemma 8 in \cite{Cong}.
\end{proof}

\section{Instability of Fractional Differential Equations}
We now state the main result of this paper about a criterion of instability of fractional differential equation.
\begin{theorem}\label{main result 1}
Consider the nonlinear fractional differential equation
\begin{equation}\label{mainEq}
^{C\!}D_{0+}^\alpha x(t)=Ax(t)+f(x(t)),
\end{equation}
where $t\in \R_{\ge 0}$, $x\in\R^d$, $A\in \R^{d\times d}$, and $f:\R^d \rightarrow \R^d$ is Lipschitz continuous on a neighborhood of the origin. Assume that the spectrum $\sigma(A)$ of $A$ satisfies
\begin{equation}\label{SpecCond}
\sigma(A)\cap  \Lambda_{\alpha}^u\ne \emptyset,
\end{equation}
and $f$ satisfies
\begin{equation}\label{LipschitzCond}
f(0)=0\quad\hbox{and}\quad\lim_{r\to 0} \ell_f(r)=0.
\end{equation}
Then, the trivial solution of \eqref{mainEq} is unstable.
\end{theorem}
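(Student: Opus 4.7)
The plan is to argue by contradiction: assuming the trivial solution of \eqref{mainEq} is stable, I build a Lyapunov--Perron operator $T$ on $C_\infty(\R_{\ge 0};\R^d)$ whose only fixed point in a small ball is the zero function, and whose fixed points include every bounded solution of \eqref{mainEq} starting in the ``unstable subspace''. Choosing a small nonzero initial condition in that subspace then yields, by stability, a bounded solution forced to be identically zero --- the desired contradiction.

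First I would spectrally decompose $\R^d=V\oplus V^c$, where $V$ is the real $A$-invariant subspace corresponding to all eigenvalues in $\sigma(A)\cap\Lambda_\alpha^u$ (nontrivial by \eqref{SpecCond}) and $V^c$ corresponds to the remaining spectrum; write $P,Q$ for the associated projections. For clarity I present the semisimple case, so $V=\bigoplus_{\mu}V_\mu$ with $A|_{V_\mu}=\mu\,\id$ and spectral projections $P_\mu$. The candidate Lyapunov--Perron operator is
\begin{align*}
(T\xi)(t) := &\int_0^t (t-\tau)^{\alpha-1} E_{\alpha,\alpha}\bigl(A(t-\tau)^\alpha\bigr) f(\xi(\tau))\,d\tau\\
&- \sum_{\mu\in\sigma(A)\cap\Lambda_\alpha^u}\mu^{\frac{1}{\alpha}-1} E_\alpha(\mu t^\alpha)\,P_\mu \int_0^\infty \exp\bigl(-\mu^{\frac{1}{\alpha}}\tau\bigr) f(\xi(\tau))\,d\tau.
\end{align*}
The design feature is that along each unstable eigendirection the two pieces have matching Mittag--Leffler growth rates; their difference is exactly what Lemma~\ref{Lemma4} is built to control. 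Applied componentwise via left eigenvectors it yields $\|P_\mu T\xi\|_\infty \leq C(\alpha,\mu)\|f\circ\xi\|_\infty$ for each $\mu$, and combined with standard bounds on $E_{\alpha,\alpha}(A|_{V^c}s^\alpha)$ for $QT\xi$ together with $\ell_f(r)\to 0$ from \eqref{LipschitzCond} gives $\|T\xi_1-T\xi_2\|_\infty \leq C(\alpha,A)\,\ell_f(r)\,\|\xi_1-\xi_2\|_\infty$ on a ball $B_r\subset C_\infty$. For $r$ small enough, $T$ contracts $B_r$; since $T(0)=0$, its unique fixed point in $B_r$ is $\xi\equiv 0$.

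Next I would show that every bounded solution $x\in C_\infty$ of \eqref{mainEq} with $Qx(0)=0$ is a fixed point of $T$. For each unstable eigenvalue $\mu$ with left eigenvector $v_\mu$, the scalar $u_\mu:=v_\mu^T x$ solves $^{C\!}D_{0+}^\alpha u_\mu=\mu u_\mu + v_\mu^T f(x)$; dividing the scalar variation-of-constants formula by $E_\alpha(\mu t^\alpha)$ and letting $t\to\infty$ via Lemma~\ref{LimitLemma} forces
\[
P_\mu x(0)=-\mu^{\frac{1}{\alpha}-1}\int_0^\infty \exp\bigl(-\mu^{\frac{1}{\alpha}}\tau\bigr) P_\mu f(x(\tau))\,d\tau.
\]
Summing these identities over $\mu$ and substituting into the vector variation-of-constants formula for $x$, together with $Qx(0)=0$, gives $x=Tx$. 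To conclude: with $r$ so small that $T$ contracts $B_r$, let $\delta>0$ be supplied by the stability hypothesis for $\varepsilon=r$, and pick $x_0\in V\setminus\{0\}$ with $\|x_0\|<\delta$ (possible since $V\neq\{0\}$). Then $\phi(\cdot,x_0)\in C_\infty\cap B_r$ and $Q\phi(0,x_0)=Qx_0=0$, so $\phi(\cdot,x_0)$ is a fixed point of $T$; by contraction $\phi(\cdot,x_0)\equiv 0$, forcing $x_0=0$, a contradiction.

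The main obstacle is the delicate cancellation of the two exponentially growing Mittag--Leffler contributions along each unstable eigendirection, which is exactly the content of Lemma~\ref{Lemma4}. Secondary technicalities --- non-semisimple (Jordan) unstable eigenvalues (handled by projecting onto the full generalized eigenspace and using a matrix-valued version of the cancellation), complex eigenvalues of the real matrix $A$ (paired up so that $T$ preserves $\R^d$), and the bound on $QT\xi$ when $A|_{V^c}$ has eigenvalues on the stability boundary or at $0$ (possibly requiring restriction to a carefully chosen closed subspace of $C_\infty$) --- require additional bookkeeping but follow the same contraction pattern.
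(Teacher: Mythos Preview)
Your overall architecture matches the paper's exactly: assume stability, build a Lyapunov--Perron operator whose bounded fixed points coincide with bounded solutions having initial value in the unstable subspace, show it contracts, and derive a contradiction by forcing a nonzero bounded solution to be identically zero. The use of Lemma~\ref{LimitLemma} to recover the initial condition from a bounded solution is also the paper's mechanism.

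There is, however, a genuine gap in your contraction estimate. You claim that ``standard bounds on $E_{\alpha,\alpha}(A|_{V^c}s^\alpha)$'' give $\|QT\xi_1-QT\xi_2\|_\infty\le C(\alpha,A)\,\ell_f(r)\,\|\xi_1-\xi_2\|_\infty$. This requires $\int_0^\infty \tau^{\alpha-1}\,|E_{\alpha,\alpha}(\mu\tau^\alpha)|\,d\tau<\infty$ for every $\mu\in\sigma(A)\setminus\Lambda_\alpha^u$, which is false when $\mu=0$ or $|\arg\mu|=\frac{\alpha\pi}{2}$: in both cases $|E_{\alpha,\alpha}(\mu\tau^\alpha)|$ is merely bounded (not decaying), so the integrand behaves like $\tau^{\alpha-1}$ at infinity and diverges. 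The paper gives exactly this as a counterexample (take $d=2$, $\mu_2=0$) and stresses that $\mathcal T$ is \emph{not} a contraction in the sup norm. Their fix is a weighted norm $\|\xi\|_w=\sup_{t\ge 0}\|\xi(t)\|\exp(-wt)$ with $w>0$ chosen strictly smaller than the real part of every $\mu_i^{1/\alpha}$ for $\mu_i\in\Lambda_\alpha^u$; the factor $\exp(-w\tau)$ restores integrability on the $V^c$-components while still being compatible with the Lemma~\ref{Lemma4} cancellation on the unstable components. This is the central technical device of the proof, not secondary bookkeeping, and your suggested alternative of ``restriction to a carefully chosen closed subspace of $C_\infty$'' does not obviously work, since you have no a priori reason that the bounded solution $\phi(\cdot,x_0)$ produced by the stability hypothesis lies in any such subspace.

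A smaller point: in the non-semisimple case the nilpotent part does not vanish and $\ell_h(r)\to\gamma>0$ rather than $0$, so one cannot simply take $r$ small; the paper handles this by the standard rescaling $P_i=\mathrm{diag}(1,\gamma,\dots,\gamma^{d_i-1})$ to make $\gamma$ arbitrarily small and then chooses $\gamma<\frac{1}{2K(\alpha,A)}$ before fixing $\varepsilon$. Your ``matrix-valued version of the cancellation'' would have to reproduce this, which is more than bookkeeping.
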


For a proof of this theorem we follow the approach of \cite{Cong_3,Cong_4}. Namely, first we transform the linear part to a simple form; then construct an appropriate Lyapunov--Perron operator which is a contraction and its fixed point is a solution of \eqref{mainEq}, and exploit the property of the Lyapunov--Perron operator to derive the conclusion of the theorem. To do this we need some preparatory steps, the details of which we now present.

{\bf Transformation of the linear part}

Let $\sigma(A):=\{\hat\lambda_1,\dots, \hat\lambda_m\}$ denote the spectrum of $A$, i.e.,\ $\hat\lambda_1,\dots,\hat\lambda_m \in \C$ is the collection of all the distinct eigenvalues of $A$. Let $T\in\C^{d\times d}$ be a nonsingular matrix  transforming $A$ into its Jordan normal form, i.e.,
\[
T^{-1}A T=\hbox{diag}(A_1,\dots,A_n),
\]
where for $i=1,\dots,n$ the block $A_i$ is of the following form
\[
A_i=\lambda_i\, \id_{d_i\times d_i}+\eta_i\, N_{d_i\times d_i},
\]
where $\eta_i\in\{0,1\}$, $\lambda_i \in \sigma(A)$, and the nilpotent matrix $N_{d_i\times d_i}$ is given by
\[
N_{d_i\times d_i}:=
\left(
      \begin{array}{*7{c}}
      0  &     1         &    0      & \cdots        &  0        \\
        0        & 0    &    1     &   \cdots      &              0\\
        \vdots &\vdots        &  \ddots         &          \ddots &\vdots\\
        0 &    0           &\cdots           &  0 &          1 \\

        0& 0  &\cdots                                          &0         & 0 \\
      \end{array}
    \right)_{d_i \times d_i}.
\]
Note that with this transformation we leave the field of real numbers and consider differential equations in the complex numbers. Only if all eigenvalues of $A$ are real, we remain in $\R$. For a general real-valued matrix $A$ we may simply embed $\R$ into $\C$, consider $A$ as a complex-valued matrix and thus get the above Jordan form for $A$. Alternatively, we may use a real-valued Jordan form  (see \cite[Chapter 6, p.\ 243]{Lancaster}; for a discussion on similar issues for FDE see also \cite[pp.\ 152--153]{Kai}). For simplicity we use the embedding method and omit the discussion on how to return back to the field of real numbers. Note also that those techniques are well known in the theory of ordinary differential equations.

Let $\gamma$ be an arbitrary but fixed positive number. Using the transformation $P_i:=\textup{diag}(1,\gamma,\dots,\gamma^{d_i-1})$, we obtain that
\begin{equation*}
P_i^{-1} A_i P_i=\lambda_i\, \id_{d_i\times d_i}+\gamma_i\, N_{d_i\times d_i},
\end{equation*}
$\gamma_i\in \{0,\gamma\}$. Put $P:= \textup{diag}(P_1,\dots,P_n)$, then under the transformation $y:=(TP)^{-1}x$ system \eqref{mainEq} becomes (denoting the solution again by $x$ instead of $y$)
\begin{align}\label{NewSystem}
^{C\!}D_{0+}^\alpha x(t)&=\hbox{diag}(J_1,\dots,J_n)x(t)+h(x(t)),\\
\notag &=Jx(t)+h(x(t)),
\end{align}
where $J:=\text{diag}(J_1,\dots,J_n)$, $J_i:=\lambda_i \id_{d_i\times d_i}$ for $i=1,\dots,n$ and the function $h$ is given by
\begin{equation}\label{Eq3}
h(x):=\text{diag}(\gamma_1N_{d_1\times d_1},\dots,\gamma_nN_{d_n\times d_n})x+(TP)^{-1}f(TPx).
\end{equation}
Note that
\begin{equation}\label{eqn.new-h}
h(0)=0,\qquad \lim_{r\to 0}\ell_h (r)= \left\{
\begin{array}{ll}
\gamma  & \hbox{if there exists } \gamma_i=\gamma,\\[1ex]
0 & \hbox{otherwise}.
\end{array}
\right.
\end{equation}
Since the spectrum of $A$ satisfies the instability condition we can find at least one eigenvalue $\hat\lambda_i\in \Lambda_{\alpha}^u$. Without loss of generality, we can assume that $\hat\lambda_i\in\Lambda_{\alpha}^u$ for $i=1\ldots,k'$, $1 \leq k' \leq d$. Consequently, for simplicity of notation we can write \eqref{NewSystem} in the form
\begin{equation}\label{NewSystem1}
^{C}D_{0+}^\alpha x(t)=\hbox{diag}(\mu_1,\dots,\mu_d)x(t)+h(x(t)),
\end{equation}
where $h$ is defined by \eqref{Eq3} and
\begin{align}\label{neweq.instability}
\notag \mu_i  &\in\sigma(A) = \{\hat\lambda_1,\ldots,\hat\lambda_m\}, \qquad i=1,\ldots,d,\\
 \mu_i  &\in\Lambda_{\alpha}^u,\qquad\hspace*{3cm} i=1,\ldots,k,\\
\notag \mu_i  &\in\sigma(A)\setminus \Lambda_{\alpha}^u, \qquad\hspace*{1.8cm} i=k+1,\ldots,d.
\end{align}
%\end{remark}
%
\begin{remark}\label{Remark2}
Since the transforming matrix $TP$ is constant, the type of stability of the trivial solution of equations \eqref{mainEq} and \eqref{NewSystem1} are the same,
 i.e., they are either both stable or both unstable.
\end{remark}

{\bf Construction of an appropriate Lyapunov--Perron operator}

We define a specific Lyapunov--Perron operator associated with the equation \eqref{NewSystem1} as follows. For any $\xi \in C_{\infty}(\R_{\ge 0};\C^d)$, the Lyapunov--Perron operator $\mathcal{T}: C_{\infty}(\R_{\ge 0};\C^d) \rightarrow C(\R_{\ge 0};\C^d)$ is defined by
\begin{equation}\label{eqn.LyaPer}
\mathcal{T}\xi(t):=  ((\mathcal{T}\xi)^1(t),\dots,(\mathcal{T}\xi)^d(t))^{\rT}\qquad\hbox{for all}\quad t\in\R_{\geq 0},
\end{equation}
where for $i=1,\ldots,k$ we set
\begin{align}\label{DefineT1}
\notag (\mathcal{T}\xi)^i(t):=& \int_0^t (t-\tau)^{\alpha-1}E_{\alpha,\alpha}(\mu_i(t-\tau)^\alpha )h^i(\xi(\tau))\;d\tau\\
 &\hspace{0.5cm} -\mu_i^{\frac{1}{\alpha}-1}E_\alpha(\mu_it^\alpha)\int_0^\infty \exp{\big(-\mu_i^{\frac{1}{\alpha}}\tau\big)}h^i(\xi(\tau))\;d\tau,
\end{align}
 and for $i=k+1,\ldots,d$ we set
\begin{equation}\label{DefineT2}
(\mathcal{T}\xi)^i(t):=\int_0^t (t-\tau)^{\alpha-1}E_{\alpha,\alpha}(\mu_i(t-\tau)^\alpha)h^i(\xi(\tau))\;d\tau,
\end{equation}
here $h(x) = (h^1(x),\ldots,h^d(x))^{\rT}$ is the coordinate representation of the vector $h(x)$.
%\begin{remark}
Note that under an additional assumption that $\mu_i\not=0$ and $|\mbox{arg}(\mu_i)|\not=\frac{\alpha \pi}{2}$ for $i=1,\dots,d$,  it is proved in \cite{Cong_4} that there exists a neighborhood of the trivial function in $C_{\infty}(\R_{\geq 0};\C^d)$ equipped with the sup norm such that the operator $\mathcal T$ is a contraction on this neighborhood. 
 In this paper, it is only assumed  that $|\mbox{arg}(\mu_i)|< \frac{\alpha \pi}{2}$ for $i=1,\dots,k$ and for $i=k+1,\dots,d$ either $\mu_i=0$ or $|\mbox{arg}(\mu_i)|\geq \frac{\alpha \pi}{2}$. Therefore, the operator $\mathcal T$ is in general not a contraction with respect to the sup norm. To see this, consider the case that $d=2, k=1$, $\mu_2=0$, $h(x):=(0,x_2^2)^{\rT}$, then the operator $\mathcal T$ defined by \eqref{eqn.LyaPer} has its second coordinate given by
\[
(\mathcal{T}\xi)^2(t):=\int_0^t (t-\tau)^{\alpha-1} h^2(\xi(\tau))\;d\tau,
\]
hence, obviously, $\mathcal T$ as well as any of its restrictions to small balls around the origin in $C_{\infty}(\R_{\ge 0};\C^2)$ is not a contraction.
%Hence, for  $h(x):=(0,x_2^2)^{\rT}$ we have $\mathcal T C_{\infty}(\R_{\ge 0};\C^2)\not\subset  C_{\infty}(\R_{\ge 0};\C^2)$.
%\end{remark}

In what follows,  by introducing a suitable weighted norm $\|\cdot\|_w$ on $C_\infty(\R_{\geq 0};\C^d)$, we show that the Lyapunov--Perron operator $\mathcal T$ is contractive on a neighborhood of the trivial function.

For $i=1,\ldots,k$, we write $\mu_i$ in the form:
\[
\mu_i=r_i(\cos \phi_i+\iota \sin \phi_i),
\]
where $\iota$ denotes the imaginary unit, $r_i>0$ is the modulus and $-\frac{\alpha \pi}{2}<\phi_i<\frac{\alpha \pi}{2}$ is the argument of the complex number $\mu_i\in \Lambda_\alpha^u$. Let
\begin{equation}\label{WeightFactor}
w:=\frac{\min_{1\le i\le k}\big\{r_i^{\frac{1}{\alpha}}\cos \frac{\phi_i}{\alpha}\big\}}{3} >0,
\end{equation}
and define a weighted norm-like function on $C(\R_{\ge 0};\C^d)$ by
\[
\|\xi\|_w:=\sup_{t\ge 0}\frac{\|\xi(t)\|}{\exp(w t)}.
\]
It is easily seen that $\|\cdot\|_w$ has the three properties of a norm but it might take the value $+\infty$ on the space $C(\R_{\ge 0};\C^d)$. It is a norm on the space $C_\infty(\R_{\geq 0};\C^d)$ and $(C_\infty(\R_{\geq 0};\C^d), \|\cdot\|_w)$ is a Banach space.

From \eqref{Eq3} and \eqref{eqn.new-h} it follows that $h$ is Lipschitz continuous on a neighborhood of the origin. Hence, there exists $\widehat\varepsilon>0$ such that $\ell_h(r)<\infty$ for all $0<r<\widehat\varepsilon$, where  $\ell_h(r)$ is defined according to \eqref{eqn.l_f}.  Using the notation 
\[
B_{C_\infty}(0,\widehat\varepsilon):=\Big\{\xi\in C(\R_{\ge 0};\C^d):\left||\xi|\right|_\infty\le \widehat\varepsilon\Big\},
\]
the following proposition gives an estimate for the operator $\mathcal{T}$.
\begin{proposition}\label{Prp2}
Consider system \eqref{NewSystem1}. Then, there exists a positive constant $K(\alpha,A)$ depending on $\sigma(A)$ such that the following inequality holds
\[
 \|\mathcal {T}\xi-\mathcal{T}\hat{\xi}\|_w  \le K(\alpha,A)\; \ell_h (\max\{\|\xi\|_\infty,\|\hat{\xi}\|_\infty\})\|\xi-\hat{\xi}\|_w,
\]
for all $\xi,\hat{\xi}\in B_{C_\infty}(0,\widehat\varepsilon)$.
\end{proposition}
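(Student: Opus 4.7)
The strategy is to bound $|(\mathcal{T}\xi)^i(t) - (\mathcal{T}\hat{\xi})^i(t)|\,e^{-wt}$ coordinate by coordinate and then take the supremum over $t$. In every coordinate the Lipschitz property of $h$ supplies the pointwise bound
\[
|h^i(\xi(\tau)) - h^i(\hat{\xi}(\tau))| \le \ell_h(r)\,\|\xi-\hat{\xi}\|_w\, e^{w\tau}, \qquad r := \max\{\|\xi\|_\infty,\|\hat{\xi}\|_\infty\},
\]
which is the mechanism by which pointwise kernel estimates become bounds involving the weighted norm with the Lipschitz factor $\ell_h(r)$ out front.

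For the indices $i = k+1,\ldots,d$ only the convolution piece \eqref{DefineT2} appears; after taking absolute values, substituting $s = t-\tau$, and dividing by $e^{wt}$, the required estimate reduces to the finiteness of
\[
\int_0^\infty s^{\alpha-1}\,|E_{\alpha,\alpha}(\mu_i s^\alpha)|\,e^{-ws}\,ds.
\]
If $\mu_i = 0$ this equals $w^{-\alpha}$, and if $\mu_i \ne 0$ with $|\arg(\mu_i)| \ge \alpha\pi/2$ the classical polynomial decay $|E_{\alpha,\alpha}(\mu_i s^\alpha)| \le C/(1+|\mu_i| s^\alpha)$ gives convergence.

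For the unstable indices $i = 1,\ldots,k$ I would split $\int_0^\infty = \int_0^t + \int_t^\infty$ in the second term of \eqref{DefineT1}. The $\int_0^t$ piece fuses with the original convolution to yield exactly the kernel
\[
K_i(t,\tau) := (t-\tau)^{\alpha-1}E_{\alpha,\alpha}(\mu_i(t-\tau)^\alpha) - \mu_i^{\frac{1}{\alpha}-1}E_\alpha(\mu_i t^\alpha)\exp(-\mu_i^{\frac{1}{\alpha}}\tau)
\]
controlled by Lemma~\ref{Lemma4}; applying that lemma to the constant function $g\equiv 1$ shows $\int_0^t |K_i(t,\tau)|\,d\tau \le K(\alpha,\mu_i)$, and then the trivial bound $e^{w\tau} \le e^{wt}$ for $\tau\le t$ produces the contraction-type estimate with constant $K(\alpha,\mu_i)\ell_h(r)$. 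For the remaining $\int_t^\infty$ piece, the choice \eqref{WeightFactor} is decisive: it forces $w < r_i^{1/\alpha}\cos(\phi_i/\alpha)$ strictly, so
\[
\int_t^\infty e^{(w - r_i^{1/\alpha}\cos(\phi_i/\alpha))\tau}\,d\tau = \frac{e^{-(r_i^{1/\alpha}\cos(\phi_i/\alpha) - w)t}}{r_i^{1/\alpha}\cos(\phi_i/\alpha) - w},
\]
and this decay absorbs the growth $|E_\alpha(\mu_i t^\alpha)| \le C\,\exp(r_i^{1/\alpha}\cos(\phi_i/\alpha)\,t)$ supplied by Lemma~\ref{Lemma3}, leaving a factor $e^{wt}$ that cancels against the leading $e^{-wt}$.

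The main obstacle is precisely the unstable coordinates. A naive application of Lemma~\ref{Lemma4} with $g = h^i(\xi) - h^i(\hat\xi)$ would only yield a bound in terms of $\|g\|_\infty \le 2\ell_h(r)\widehat\varepsilon$, losing the factor $\|\xi-\hat\xi\|_w$ on the right-hand side and hence failing to deliver contractivity. The split above is engineered so that either $\tau \le t$ and the weight $e^{w\tau}$ is controlled trivially by $e^{wt}$, or $\tau \ge t$ and the exponential decay beats $e^{w\tau}$ by the strict margin $2w$ built into \eqref{WeightFactor}. Summing the $d$ coordinate bounds yields the advertised constant $K(\alpha,A)$, which depends on $\sigma(A)$ only through the moduli $r_i$ and arguments $\phi_i$ of the eigenvalues of $A$ inside $\Lambda_\alpha^u$.
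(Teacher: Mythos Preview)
Your proposal is correct and follows essentially the same route as the paper's proof: the same coordinate-by-coordinate split, the same decomposition of the unstable coordinates into the $\int_0^t$ kernel piece controlled by Lemma~\ref{Lemma4} (with the trivial bound $e^{w\tau}\le e^{wt}$) and the $\int_t^\infty$ tail absorbed by the strict margin in~\eqref{WeightFactor} via Lemma~\ref{Lemma3}, and the same reduction of the non-unstable coordinates to the finiteness of $\int_0^\infty s^{\alpha-1}|E_{\alpha,\alpha}(\mu_i s^\alpha)|e^{-ws}\,ds$. One small inaccuracy: your final sentence says $K(\alpha,A)$ depends on $\sigma(A)$ only through the eigenvalues inside $\Lambda_\alpha^u$, but the integrals for $i=k+1,\dots,d$ also contribute, so $K(\alpha,A)$ depends on the full spectrum (as the paper's definition of $K(\alpha,A)$ makes explicit).
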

\begin{proof}
Let $\xi,\hat{\xi}\in C_\infty(\R_{\ge 0};\C^d)$ be arbitrary. For  any index $i\in \{1,\ldots,k\}$ and any $t\geq 0$, from the definition \eqref{DefineT1} of the operator $\mathcal{T}$, we have
\begin{align*}
&|(\mathcal{T}\xi)^i(t)-(\mathcal{T}\hat{\xi})^i(t)|\\
&\hspace{1cm}\le \int_0^t \left|(t-\tau)^{\alpha-1}E_{\alpha,\alpha}(\mu_i(t-\tau)^\alpha)-\mu_i^{\frac{1}{\alpha}-1}E_\alpha(\mu_it^\alpha)\exp{\big(-\mu_i^{\frac{1}{\alpha}}\tau\big)}\right|\\
&\hspace{2.0cm}\cdot |h^i(\xi(\tau))-h^i(\hat{\xi}(\tau))|\;d\tau\\
&\hspace{1.5cm}+\int_t^\infty \left|\mu_i^{\frac{1}{\alpha}-1}E_\alpha(\mu_it^\alpha) \exp{\big(-\mu_i^{\frac{1}{\alpha}}\tau\big)}\right|\cdot |h^i(\xi(\tau))-h^i(\hat{\xi}(\tau))|\;d\tau.
\end{align*}
Hence,
\begin{align*}
&\frac{|(\mathcal{T}\xi)^i(t)-(\mathcal{T}\hat{\xi})^i(t)|}{\exp(w t)}\\
&\hspace{1cm}\le  \int_0^t \left|(t-\tau)^{\alpha-1}E_{\alpha,\alpha}(\mu_i(t-\tau)^\alpha)-\mu_i^{\frac{1}{\alpha}-1}E_\alpha(\mu_it^\alpha)\exp{\big(-\mu_i^{\frac{1}{\alpha}}\tau\big)}\right|\\
& \hspace{2.0cm}\cdot \ell_h(\max\{\|\xi\|_\infty,\|\hat{\xi}\|_\infty\})\frac{\|\xi(\tau)-\hat{\xi}(\tau)\|}{\exp(w \tau)}\;d\tau\\
&\hspace{1.5cm}+\int_t^\infty \left|\mu_i^{\frac{1}{\alpha}-1}E_\alpha(t^\alpha \mu_i) \exp{\big(-\mu_i^{\frac{1}{\alpha}}\tau\big)}\right|\exp{w (\tau-t)}\\
&\hspace{2.3cm}\cdot\ell_h(\max\{\|\xi\|_\infty,\|\hat{\xi}\|_\infty\})\frac{\|\xi(\tau)-\hat{\xi}(\tau)\|}{\exp(w \tau)}\;d\tau.
\end{align*}
Since $\mu_i\in\Lambda_{\alpha}^u$, according to Lemma \ref{Lemma4}, we can find a constant $K(\alpha,\mu_i)>0$ such that for all $t\geq 0$
\begin{equation}\label{est_1}
\int_0^t \left|(t-\tau)^{\alpha-1}E_{\alpha,\alpha}(\mu_i(t-\tau)^\alpha)-\mu_i^{\frac{1}{\alpha}-1}E_\alpha(\mu_it^\alpha)\exp{\big(-\mu_i^{\frac{1}{\alpha}}\tau\big)}\right|\,d\tau
\leq K(\alpha,\mu_i).
\end{equation}
Furthermore, by definition of $w$ as in \eqref{WeightFactor} and  Lemma~\ref{Lemma3} there exist $t_0>0$ and $m(\alpha,\mu_i)>0$ such that for any $t \geq t_0$ we have
\begin{align}\label{est_2}
\notag & \int_t^\infty \left|\mu_i^{\frac{1}{\alpha}-1}E_\alpha(\mu_it^\alpha) \exp{(-\mu_i^{\frac{1}{\alpha}}\tau)}\right|\exp(w (\tau-t))\;d\tau\\
\notag & \le r_i^{\frac{1}{\alpha}-1}\int_t^\infty \left(\frac{\exp\big(r_i^{\frac{1}{\alpha}}t\cos\frac{\phi_i}{\alpha}\big)}{\alpha}+\frac{m(\alpha,\mu_i)}{t_0^\alpha}\right) |\exp{(-\mu_i^{\frac{1}{\alpha}}\tau)}| \exp(w (\tau-t))\;d\tau\\
&\le r_i^{\frac{1}{\alpha}-1}\left(\frac{1}{\alpha}+\frac{m(\alpha,\mu_i)}{t_0^\alpha}\right)\int_0^\infty \exp(-w \tau)\;d\tau,
\end{align}
while for $0\le t\le t_0$
\begin{align}\label{est_3}
\notag &\int_t^\infty \left|\mu_i^{\frac{1}{\alpha}-1}E_\alpha(\mu_it^\alpha) \exp{\big(-\mu_i^{\frac{1}{\alpha}}\tau\big)}\right|\exp(w (\tau-t))\;d\tau\\
&\hspace{1cm} \le r_i^{\frac{1}{\alpha}-1} E_\alpha(r_i t_0^\alpha)\int_0^\infty \exp(-w \tau)\;d\tau.
\end{align}
From \eqref{est_1}, \eqref{est_2}, and \eqref{est_3}, for each $i=1,\ldots,k$, we can find a constant $\hat{K}(\alpha,\mu_i) > 0$ such that
\[
 \|(\mathcal {T}\xi)^i-(\mathcal{T}\hat{\xi})^i\|_w  \le \hat{K}(\alpha,\mu_i)\; \ell_h (\max\{\|\xi\|_\infty,\|\hat{\xi}\|_\infty\})\|\xi-\hat{\xi}\|_w.
\]
Now for $i=k+1,\ldots,d$, using \eqref{DefineT2}, we obtain for any $t\geq 0$ the estimate
\begin{align*}
 \frac{|(\mathcal{T}\xi)^i(t)-(\mathcal{T}\hat{\xi})^i(t)|}{\exp(w t)}\le &\int_0^t (t-\tau)^{\alpha-1}|E_{\alpha,\alpha}(\mu_i (t-\tau)^\alpha)|\exp(-w (t-\tau))\\
&\hspace{0.5cm} \cdot \ell_h(\max\{\|\xi\|_\infty,\|\hat{\xi}\|_\infty\})\frac{|\xi(\tau)-\hat{\xi}(\tau)|}{\exp(w \tau)}\;d\tau,
\end{align*}
which implies that
\begin{align*}
 \frac{|(\mathcal{T}\xi)^i(t)-(\mathcal{T}\hat{\xi})^i(t)|}{\exp(w t)}\le &\int_0^t \tau^{\alpha-1}|E_{\alpha,\alpha}(\mu_i \tau^\alpha)|\exp(-w \tau)\;d\tau\\
&\hspace{0.5cm} \cdot \ell_h(\max\{\|\xi\|_\infty,\|\hat{\xi}\|_\infty\})\|\xi(\tau)-\hat{\xi}(\tau)\|_w.
\end{align*}
Note that for $\mu_i\in\sigma(A)\setminus \Lambda_\alpha^u$, it is clear that
$|E_{\alpha,\alpha}(\mu_i t^\alpha)|$ is bounded on $\R_{\ge 0}$, and thus
\[
\int_0^\infty \tau^{\alpha-1}|E_{\alpha,\alpha}(\mu_i \tau^\alpha)|\exp(-w \tau)\;d\tau<\infty.
\]
Hence, for $i=k+1,\ldots, d$, the following inequalities hold
\begin{align*}
& \|(\mathcal {T}\xi)^i-(\mathcal{T}\hat{\xi})^i\|_w\\
 &\hspace{1.0cm} \le \int_0^\infty \tau^{\alpha-1}|E_{\alpha,\alpha}(\mu_i \tau^\alpha)|\exp(-w \tau)d\tau \cdot \ell_h (\max\{\|\xi\|_\infty,\|\hat{\xi}\|_\infty\})\|\xi-\hat{\xi}\|_w.
\end{align*}
Let $\hat{K}(\alpha,\Lambda_\alpha^u):=\max_{1\le i\le k}\hat{K}(\alpha,\mu_i)$ and
$$
 K(\alpha,A):=\max\left\{\hat{K}(\alpha,\Lambda_\alpha^u),\max_{k+1\le i\le d}\int_0^\infty \tau^{\alpha-1}|E_{\alpha,\alpha}(\mu_i \tau^\alpha)|\exp(-w\tau)\;d\tau \right\},
$$
then the proof is complete.
\end{proof}
From now on, we choose and fix the parameter $\gamma>0$ in the definition of the transformation $P$ above such that $\gamma<\frac{1}{2K(\alpha,A)}$. By this choice of $\gamma$ the
 systems \eqref{NewSystem} and \eqref{NewSystem1} are completely specified. Due to \eqref{eqn.new-h}, there exists a positive constant $\varepsilon$ such that 
\begin{equation}\label{Eq7a}
0 < \varepsilon < \widehat\varepsilon, \qquad\hbox{and}\qquad
K(\alpha,A)\;\ell_h (\varepsilon) \leq \frac{2}{3}.
\end{equation}
By using \eqref{Eq7a} and Proposition \ref{Prp2}, we obtain immediately the following property of $\mathcal T$.
\begin{proposition}\label{Lemma6}
For all  $\xi,\hat{\xi}\in B_{C_\infty}(0,\varepsilon)$, where $\varepsilon$ is a positive number satisfying \eqref{Eq7a},  we have
\begin{equation*}
\|\mathcal{T}\xi-\mathcal{T}\hat{\xi}\|_w
\leq
\frac{2}{3}\;\|\xi-\widehat{\xi}\|_w.
\end{equation*}
\end{proposition}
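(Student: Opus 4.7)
The plan is to derive this proposition as a direct corollary of Proposition~\ref{Prp2} combined with the defining properties of $\varepsilon$ in \eqref{Eq7a}. First, since $\varepsilon < \widehat{\varepsilon}$, any $\xi,\hat{\xi}\in B_{C_\infty}(0,\varepsilon)$ automatically lie in $B_{C_\infty}(0,\widehat{\varepsilon})$, which is the hypothesis of Proposition~\ref{Prp2}. Applying that proposition yields
$$\|\mathcal{T}\xi-\mathcal{T}\hat{\xi}\|_w \leq K(\alpha,A)\,\ell_h\big(\max\{\|\xi\|_\infty,\|\hat{\xi}\|_\infty\}\big)\,\|\xi-\hat{\xi}\|_w.$$

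The next step is to observe that the function $r\mapsto \ell_h(r)$, defined via \eqref{eqn.l_f} (applied to $h$), is non-decreasing in $r$: as $r$ grows, the supremum in its definition is taken over the larger set $B_{\R^d}(0,r)$. Since $\xi,\hat{\xi}\in B_{C_\infty}(0,\varepsilon)$ implies $\max\{\|\xi\|_\infty,\|\hat{\xi}\|_\infty\}\leq \varepsilon$, monotonicity gives $\ell_h(\max\{\|\xi\|_\infty,\|\hat{\xi}\|_\infty\})\leq \ell_h(\varepsilon)$. Invoking the defining property $K(\alpha,A)\,\ell_h(\varepsilon)\leq 2/3$ from \eqref{Eq7a} then completes the proof:
$$\|\mathcal{T}\xi-\mathcal{T}\hat{\xi}\|_w \leq K(\alpha,A)\,\ell_h(\varepsilon)\,\|\xi-\hat{\xi}\|_w \leq \tfrac{2}{3}\,\|\xi-\hat{\xi}\|_w.$$

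No real obstacle is expected here: the statement is essentially a packaging of Proposition~\ref{Prp2} together with the careful selection of $\gamma$ (ensuring $\gamma < 1/(2K(\alpha,A))$) and of $\varepsilon$ via \eqref{Eq7a}, both of which were designed precisely to turn $\mathcal{T}$ into a contraction with constant $2/3$ on $B_{C_\infty}(0,\varepsilon)$. The only point worth verifying with care is the monotonicity of $\ell_h$, and that is immediate from the definition. The proposition is recorded separately because the explicit contraction constant $2/3$ is the form that will be used for the Banach fixed point argument in the subsequent instability analysis.
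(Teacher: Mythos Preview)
Your proof is correct and follows exactly the route the paper indicates: the paper states that Proposition~\ref{Lemma6} is obtained ``immediately'' from Proposition~\ref{Prp2} together with \eqref{Eq7a}, and your argument simply makes explicit the one implicit step (monotonicity of $r\mapsto\ell_h(r)$) needed to pass from $\ell_h(\max\{\|\xi\|_\infty,\|\hat\xi\|_\infty\})$ to $\ell_h(\varepsilon)$.
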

\begin{proof}[Proof of Theorem \ref{main result 1}]
Due to Remark \ref{Remark2}, it is sufficient to prove the instability for the trivial solution of system \eqref{NewSystem1}. Assume to the contrary that the trivial solution of \eqref{NewSystem} is stable. Let $\varepsilon>0$ satisfy \eqref{Eq7a} and assume that the function $f$ is Lipschitz continuous on $B_{\C^d}(0,\eps)$. Choose $\delta=\delta(\varepsilon)>0$ such that for any $x_0\in B_{\C^d}(0,\delta)$ the solution $\phi(\cdot,x_0)$ satisfies $\|\phi(t,x_0)\|\le \varepsilon$ for every $t\ge 0$. Note that a vector $x\in \C^d$ can be written component-wise as $x=(x^1,x^2,\ldots,x^d)^{\rm T}$. Take and fix a vector $x_0=(x_0^1,\ldots,x_0^k,0,\ldots,0)^{\rm T}\in B_{\C^d}(0,\delta)\setminus\{0\}$. 
By  the variation of constants formula  for fractional differential equations  (see \cite[Theorem 1]{Cong} and \cite[Remark 7.1, pp. 135--136]{Kai}), $\phi(\cdot,x_0)$ is represented in the form
\begin{align*}
\phi(t,x_0)&=E_{\alpha}(t^{\alpha}\hbox{diag}(\mu_1,\dots,\mu_d))x_0\\
& \hspace{1.0cm}+\int_0^t (t-\tau)^{\alpha-1}E_{\alpha,\alpha}((t-\tau)^{\alpha}J)h (\phi(\tau,x_0))\;d\tau\\
&=: (\phi^1(t,x_0),\phi^2(t,x_0), \ldots,\phi^d(t,x_0))^{\rm T}.
\end{align*}
Therefore, writing this formula component-wise, for $i=1,\ldots,k$, we have
\begin{align*}
\phi^i(t,x_0)&=E_\alpha(\mu_it^\alpha)x_0^i+\int_0^t (t-\tau)^{\alpha-1}E_{\alpha,\alpha}(\mu_i(t-\tau)^\alpha)h^i(\phi(\tau,x_0))\;d\tau\\
&=E_\alpha(\mu_it^\alpha)\left[x^i_0+\int_0^t \frac{(t-\tau)^{\alpha-1}E_{\alpha,\alpha}(\mu_i(t-\tau)^\alpha)h^i(\phi(\tau,x_0))}{E_\alpha (\mu_it^\alpha)}\;d\tau\right]
\end{align*}
and for $i=k+1,\ldots, d$, we have
$$
\phi^i(t,x_0)=\int_0^t (t-\tau)^{\alpha-1}E_{\alpha,\alpha}(\mu_i(t-\tau)^\alpha)h^i(\phi(\tau,x_0))\;d\tau.
$$
By virtue of Lemma \ref{Lemma3}, 
$\lim_{t\to\infty}|E_\alpha(\mu_i t^\alpha)|=\infty$  for  $i=1,\ldots, k$. Therefore, $\sup_{t\in\R_{\geq 0}}\|\phi(t,x_0)\|\leq \eps$ implies that for $i=1,\dots,k$
\begin{eqnarray*}
 x_0^i
 &=&
 -\lim_{t\to\infty} \int_0^t \frac{(t-\tau)^{\alpha-1}E_{\alpha,\alpha}(\mu_i(t-\tau)^\alpha)h^i(\phi(\tau,x_0))}{E_\alpha (\mu_it^\alpha)}\;d\tau\\
&=&
-\mu_i^{\frac{1}{\alpha}-1}\int_0^\infty \exp\big(-\mu_i^{\frac{1}{\alpha}}\tau)h^i(\phi(\tau,x_0))\;d\tau,
\end{eqnarray*}
where we use Lemma \ref{LimitLemma} to have the preceding limit. 
Therefore, $\phi(\cdot,x_0)$ is a fixed point of the operator $\mathcal{T}$, i.e.,
$\mathcal{T} \phi(\cdot,x_0) = \phi(\cdot,x_0)$.
Obviously $\mathcal{T}0=0$, and both the function $\phi(\cdot,x_0)$  and the trivial function $0$ belong to
  $B_{C_{\infty}}(0,\varepsilon)$.
Therefore, by virtue of Proposition~\ref{Lemma6} we have
\begin{align*}
\|\phi(\cdot,x_0)- 0\|_w&= \|\mathcal {T}\phi(\cdot,x_0)-\mathcal{T}0\|_w  \le K(\alpha;A)\; \ell_h (\varepsilon)\|\phi(\cdot,x_0)- 0\|_w\\
&\le \frac{2}{3}\;\|\phi(\cdot,x_0)-0\|_w,
\end{align*}
which implies that $\phi(t,x_0)=0$ for all $t\ge 0$. We arrive at  a contraction because $\phi(0,x_0)=x_0\ne 0$. Thus, the trivial solution of \eqref{NewSystem} is unstable and the proof of the theorem is complete.
\end{proof}

%\begin{corollary}
%Theorem \ref{main result} still hold if the conditions that $f$ is Lipschitz continuous in a neighborhood of the origin and $\lim_{r\to %0}\ell_f(0)=0$ are replaced by the conditions that $f$ is $C^1$ in a neighborhood of the origin and $\|Df(0)\|$ is small enough (the threshold %is determined by $A$).
%\end{corollary}
\begin{remark}
(i) We get the second part of the conjecture \cite[Theorem 3, p.~81]{Matignon} formulated in the beginning of this paper as a special case of Theorem~\ref{main result 1} when $d=1$.

(ii) Note that if the matrix $A$ is hyperbolic then the stability of the perturbed system \eqref{mainEq} can be determined by using the result of \cite{Cong_4}. If  $A$ is not hyperbolic, i.e.\ $A$ has an eigenvalue $\lambda$ such that $\lambda=0$ or $|\arg(\lambda)|=\frac{\pi\alpha}{2}$, then our Theorem~\ref{main result 1} show the instability of \eqref{mainEq} provided that 
$\sigma(A)\cap \Lambda_\alpha^u\not= \emptyset$. Otherwise, in case $A$ is non-hyperbolic and $\sigma(A)\cap \Lambda_\alpha^u = \emptyset$ the stability type of the trivial solution of \eqref{mainEq} is undecided. For an example, we consider the following one-dimensional equation
\begin{equation}\label{ex_extr}
^{\!C}D^\alpha_{0+} x(t)=0.
\end{equation}
The trivial solution of \eqref{ex_extr} is stable. However, if we add the perturbation $x^2(t)$ to this equation, then the trivial solution of the perturbed equation
\[
^{\!C}D^\alpha_{0+} x(t)=x^2(t)
\]
is unstable, see \cite[Lemma 2, p.~629]{LiMa13}. On the other hand, for the perturbation $-x^3(t)$, the trivial solution of the perturbed equation
\[
^{\!C}D^\alpha_{0+} x(t)=-x^3(t)
\]
is asymptotically stable, see \cite[p.~550]{Lam}.
\end{remark}
\begin{remark}[Discussion about the paper \cite{LiMa13} by Li and Ma]\label{discussion}
As mentioned in the Introduction, Li and Ma claimed in \cite{LiMa13} that they proved the Audounet--Matignon--Montseny conjecture as a consequence of their Theorem 3 in \cite{LiMa13}. However, their paper contains flaws which make their proof incorrect. More precisely, Theorem 2 in \cite{LiMa13} concerning a construction of a fractional flow in the Caputo sense  is false. For a simple counterexample consider the one-dimensional fractional system
\begin{equation*}
\begin{cases}
^{C\!}D_{0+}^\alpha x(t)=x(t),\qquad \alpha \in (0,1),\\
x(0)=x_0,
\end{cases}
\end{equation*}
which can be solved explicitly and the solution is $x(t) = x_0 E_\alpha(t^\alpha)$ (see \cite[Theorem 7.2, p.~135]{Kai}). For this system, using the notation of \cite{LiMa13}, for $s=1$ and $t > 0$ we have
\begin{align*}
\theta_t\circ \phi_1(x_0)&= x_0 + \frac{1}{\Gamma(\alpha)}\int_0^1(t+1-\tau)^{\alpha-1} E_\alpha(\tau^\alpha)x_0 d\tau\\
&=x_0\Big(1+\frac{1}{\Gamma(\alpha)}\int_0^1(t+1-\tau)^{\alpha-1} E_\alpha(\tau^\alpha)d\tau\Big).
\end{align*}
Hence
$$
\phi_t\circ \theta_t\circ \phi_1(x_0)
= x_0\Big(1+\frac{1}{\Gamma(\alpha)}\int_0^1(t+1-\tau)^{\alpha-1} E_\alpha(\tau^\alpha)d\tau\Big)E_\alpha(t^\alpha).
$$
On the other hand,
$$
\phi_{t+1}(x_0) = x_0 E_\alpha\big((t+1)^\alpha\big).
$$
Theorem 2 of \cite{LiMa13} asserts that $\phi_{t+1}=\phi_t\circ \theta_t\circ \phi_1$ which amounts to
\begin{align*}
E_\alpha\big((t+1)^\alpha\big)&=
\left(1+\frac{1}{\Gamma(\alpha)}\int_0^1(t+1-\tau)^{\alpha-1} E_\alpha(\tau^\alpha)d\tau\right)E_\alpha(t^\alpha).
\end{align*}
However this assertion is false, because by using the asymptotic
behavior of the function $E_\alpha(\cdot)$ (see \cite[Theorem 1.3]{Podlubny} and Lemma~\ref{Lemma3} above),  for $t>0$ big enough one easily gets
\begin{align*}
E_\alpha\big((t+1)^\alpha\big)& >\left(1+\frac{E_\alpha(1)}{t^{1-\alpha}\Gamma(\alpha)}\right)E_\alpha(t^\alpha)\\
& >\Big(1+\frac{1}{\Gamma(\alpha)}\int_0^1(t+1-\tau)^{\alpha-1} E_\alpha(\tau^\alpha)d\tau\Big)E_\alpha(t^\alpha).
\end{align*}
 Thus, Theorem 2 of \cite{LiMa13} is false. The construction of a fractional flow in the Caputo sense in \cite{LiMa13} is therefore invalid. Moreover, the proof of Theorem 3 of \cite{LiMa13} which relies on Theorem 2 of \cite{LiMa13} is incorrect.
\end{remark}
\section*{Acknowledgement}
The work of the first, second and fourth author is supported by the Vietnam National Foundation for
Science and Technology Development (NAFOSTED) under Grant Number 101.03-2014.42. The third author is supported by the German Research Foundation (DFG) through the Cluster of Excellence 'Center for Advancing Electronics Dresden' (cfaed). The final part of this work was completed when the second and fourth author were visiting Vietnam Institute for Advanced Study in Mathematics (VIASM). They would like to thank VIASM for financial support of this visit.

\end{document}